\newcommand{\qform}[1]{{\left\langle{#1}\right\rangle}}
\newcommand{\pform}[1]{{\langle\!\langle{#1}\rangle\!\rangle}}
\DeclareMathOperator{\Pf}{Pf}
\DeclareMathOperator{\Nrd}{Nrd}
\DeclareMathOperator{\Sym}{Sym}
\DeclareMathOperator{\Skew}{Skew}
\DeclareMathOperator{\Int}{Int}
\newcommand{\F}{\mathbb{F}}
\newcommand{\Id}{\operatorname{Id}}
\newtheorem{prop}{Proposition}[section]
\newtheorem{lem}[prop]{Lemma}
\newtheorem{thm}[prop]{Theorem}
\newtheorem{cor}[prop]{Corollary}
\newtheorem*{BRV}{Proposition}
\theoremstyle{remark}
\newtheorem{rem}[prop]{Remark}
\title{On the Pfister Number of Quadratic Forms}
\author{R. Parimala}
\address{Department of Mathematics and Computer Science \\
Emory University \\
400 Dowman Drive \\
Atlanta, Georgia 30322  USA}
\email{parimala@mathcs.emory.edu}
\author{V. Suresh}
\address{Department of Mathematics and Statistics \\
University of Hyderabad \\
Gachibowli \\
Hyderabad - 500046 India}
\email{vssm@uohyd.ernet.in}
\author{J.-P. Tignol}
\address{D\'epartement de math\'ematique\\
Universit\'e catholique de Louvain\\
chemin du cyclotron, 2\\
B-1348 Louvain-la-Neuve, Belgium}
\email{jean-pierre.tignol@uclouvain.be}
\thanks{The first author is partially supported by NSF grant DMS-0653382
and the third author is partially supported by the Fund for
    Scientific Research F.R.S.--FNRS (Belgium)}
\subjclass[2000]{11E81}
\begin{document}

\begin{abstract}
    The generic quadratic form of even dimension $n$ with trivial
    discriminant over an arbitrary field of characteristic different
    from~$2$ containing a square root of $-1$ can be written in the Witt
    ring as a sum of $2$-fold Pfister forms using $n-2$ terms and not
    less. The number of $2$-fold Pfister forms needed to express a
    quadratic form of dimension~$6$ with trivial discriminant is
    determined in various cases.
\end{abstract}

\maketitle
\section*{Introduction}

Throughout this paper, $k$ denotes a field of characteristic different
from~$2$ in which $-1$ is a square. We use the same notation for a
quadratic form over $k$ and its Witt equivalence class in the Witt
ring $W(k)$. As usual, the quadratic form $\sum_{i=1}^na_iX^i$ with
$a_i\in k^\times$ is denoted by $\qform{a_1,\ldots,a_n}$. Since $-1$
is a square in $k$, the form $\qform{1,\ldots,1}$ is Witt equivalent
to $\qform{1}$ or $0$ according as its dimension is odd or even, hence
$W(k)$ is an algebra over the field $\F_2$ with two elements. Let
$I(k)$ be the fundamental ideal of $W(k)$, which consists of the Witt
equivalence classes of even-dimensional quadratic forms. For any
integer $m\geq1$, the $m$-th power of $I(k)$ is denoted by
$I^m(k)$. We say a quadratic form is in $I^m(k)$ if its Witt
equivalence class is in $I^m(k)$. It is well-known that for any
$m\geq1$ the ideal $I^m(k)$ is generated as a group by the classes of
$m$-fold Pfister forms, i.e., quadratic forms of the following type:
\[
\pform{a_1,\ldots,a_m}=\qform{1,a_1}\otimes\cdots\otimes
\qform{1,a_m},
\]
see \cite[Prop.~X.1.2]{L}. Brosnan,
Reichstein, and Vistoli \cite{BRV} define the \emph{$m$-Pfister
    number} $\Pf_m(q)$ of a quadratic form $q\in I^m(k)$ as the least
  number of terms in a decomposition of its 
Witt equivalence class into a sum of $m$-fold Pfister
forms. For $m$, $n\geq1$, the \emph{$(m,n)$-Pfister number}
$\Pf_k(m,n)$ is defined as the supremum of the $m$-Pfister numbers
$\Pf_m(q)$ where $q$ runs over the quadratic forms of dimension $n$
in $I^m(K)$, as $K$ varies over field extensions of $k$. In
\cite{BRV}, Pfister numbers are studied in connection with the 
essential dimension of algebraic groups.

A related invariant was defined by Parimala and Suresh in \cite{PS}
(see also Kahn's paper \cite{B}):
the \emph{length} $\lambda_m(q)$ of a quadratic form $q\in I^m(k)$ is
the least integer $r$ for which there exist $m$-fold 
Pfister forms $\pi_1$, \ldots, $\pi_r$ such that
$q\equiv\pi_1+\cdots+\pi_r\bmod I^{m+1}(k)$. In \cite{PS}, the length of 
quadratic forms was studied with reference to the $u$-invariant of
fields and some bounds were given for the length of quadratic forms
in $I^m(k)$, $1 \leq m \leq 3$. Clearly, $\lambda_m(q)\leq\Pf_m(q)$.

The following bounds were
given in \cite{BRV} for Pfister numbers of forms in $I(k)$ and
$I^2(k)$ (see also 
Proposition~\ref{prop:BRV} below):

\begin{BRV}[{\cite[Prop.~14]{BRV}}]
    $\Pf_k(1,n)\leq n$ and $\Pf_k(2,n)\leq n-2$.
\end{BRV}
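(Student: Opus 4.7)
The plan is to produce explicit decompositions, relying crucially on the hypothesis $-1\in k^{\times 2}$, which gives the vanishing $\qform{x,x}=0$ in $W(k)$ for every $x\in k^\times$ (equivalent to $\qform{1,1}=0$). This makes $W(k)$ an $\F_2$-algebra and lets pairs of identical diagonal entries cancel freely in Witt-ring computations.

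For $\Pf_k(1,n)\leq n$, I would take a diagonalization $q=\qform{a_1,\ldots,a_n}$ (with $n$ even since $q\in I(k)$) and observe that
\[
\sum_{i=1}^n\pform{a_i}=\sum_{i=1}^n\qform{1,a_i}=\qform{1,\ldots,1,a_1,\ldots,a_n}
\]
in $W(k)$, where there are $n$ copies of $\qform{1}$. Since $n$ is even, these copies cancel in pairs, leaving exactly $q$; this exhibits $q$ as a sum of $n$ one-fold Pfister forms.

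For $\Pf_k(2,n)\leq n-2$, the key idea is a telescoping sum. Setting $P_i=a_1a_2\cdots a_i$, I would consider
\[
\sum_{i=2}^{n-1}\pform{P_{i-1},a_i},
\]
which is a sum of exactly $n-2$ two-fold Pfister forms. Each term expands as $\qform{1,P_{i-1},a_i,P_i}$ in $W(k)$. The intermediate partial products $P_2,\ldots,P_{n-2}$ each appear twice in consecutive summands and cancel via $\qform{P_i,P_i}=0$; the $n-2$ copies of $\qform{1}$ (whose count is even because $n$ is even) also cancel in pairs. A short induction on the partial sums $S_k=\sum_{i=2}^k\pform{P_{i-1},a_i}$ shows that what remains at $k=n-1$ is $\qform{a_1,a_2,\ldots,a_{n-1},P_{n-1}}$. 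The hypothesis $q\in I^2(k)$ means $P_n\equiv 1$ modulo squares, so $P_{n-1}\equiv a_n$, and the sum equals $q$ in $W(k)$.

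Both decompositions are essentially computational once the correct ansatz is in hand. The only conceptual step is the telescoping choice of $\pform{P_{i-1},a_i}$, which is dictated by the requirement that, after the pairwise cancellations permitted by $-1\in k^{\times 2}$, exactly the diagonal entries of $q$ survive; the trivial-discriminant hypothesis then bridges $P_{n-1}$ and $a_n$ at the final step.
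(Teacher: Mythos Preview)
Your argument is correct. For the $1$-Pfister bound your computation is exactly the one underlying the paper's Lemma~\ref{lem:Pfelem}(i). For the $2$-Pfister bound the paper takes a different route: it passes to the combinatorial model $\F_2[V_k]$ and proves the bound there (Lemma~\ref{lem:Pfelem}(ii)) by an induction that at each step peels off a single $2$-fold Pfister element $(1+X^u)(1+X^v)$ with $u,v$ chosen from the current support, then transfers the bound back to $W(k)$ via the surjection $\Psi_*$. Your telescoping identity
\[
q=\sum_{i=2}^{n-1}\pform{P_{i-1},a_i},\qquad P_i=a_1\cdots a_i,
\]
can be viewed as one explicit unwinding of that induction (always choosing the freshly created partial product together with the next diagonal entry), but your presentation is entirely self-contained and avoids the group-algebra abstraction. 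The trade-off is that the paper's combinatorial framework is not merely packaging: the same $\F_2[V]$ setup and the functoriality in~\eqref{eq:phi} are what drive the \emph{lower} bound $\Pf_2(\xi_e)\geq n-1$ in Proposition~\ref{prop:Pf2}, so the detour pays for itself later. Your direct decomposition, on the other hand, is the cleanest way to see the upper bound in isolation.
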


Zinovy Reichstein raised the following question: Is the estimate for
the $2$-Pfister number in the proposition sharp, i.e., is $\Pf_k(2,n)=n-2$?
In this paper we answer Reichstein's question in the affirmative by
showing that the ``generic'' quadratic form $q_0$ of dimension~$n$
with trivial discriminant satisfies $\Pf_2(q_0)=n-2$, see
Theorem~\ref{thm:3} and Corollary~\ref{cor:main}. Note that for any
quadratic form $q$ of dimension~$n$ in $I^2(k)$ we have
$\lambda_2(q)\leq\frac{n-2}{2}$ (cf.\ \cite[Prop.~1.1]{B}); therefore
for the generic form $q_0$ the inequality
$\lambda_2(q_0)\leq\Pf_2(q_0)$ is strict.

The proof of Theorem~\ref{thm:3} is easily derived from a discussion
of a combinatorial analogue of Pfister numbers in
\S\ref{sec:combi}. In the last section (\S\ref{sec:low}), which is
essentially independent from \S\S\ref{sec:combi} and \ref{sec:generic},
we give some computations of Pfister numbers of quadratic forms
of dimension~$6$.

We are indebted to Zinovy Reichstein for his comments on a first
version of this paper, which helped us to improve the wording in
several points, and also to Detlev Hoffmann for suggesting an
alternative proof of Theorem~\ref{thm:3}. Ideas from this alternative
proof were used to simplify our original arguments.

\section{A combinatorial analogue}
\label{sec:combi}

Let $V$ be an arbitrary vector space over the field $\F_2$
with $2$ elements. We consider the group algebra $\F_2[V]$ as a
combinatorial analogue of the Witt ring of a field. (Indeed, the Witt
ring of any field $k$ of characteristic different from $2$ containing
a square root of $-1$ is a homomorphic image of
$\F_2[k^\times/k^{\times2}]$, see \S\ref{sec:generic}.)  
Since the addition in $V$ is multiplication in $\F_2[V]$, it
is convenient to denote by $X^v$ the image of $v\in V$ in $\F_2[V]$;
thus
\[
\F_2[V]=\Bigl\{\sum_{v\in V}\alpha_vX^v\mid\alpha_v\in\F_2
\text{ and $\{v\in V\mid \alpha_v\neq0\}$ is finite}\Bigr\},
\]
with
\[
X^0=1\qquad\text{and}\qquad X^u\cdot X^v=X^{u+v} \quad\text{for $u$,
    $v\in V$}.
\]
We consider the group homomorphisms
\[
\varepsilon_0\colon\F_2[V]\to\F_2,\qquad \varepsilon_1\colon\F_2[V]\to V
\]
defined by
\[
\varepsilon_0\Bigl(\sum_{v\in V}\alpha_vX^v\Bigr)=\sum_{v\in V}\alpha_v,\qquad
\varepsilon_1\Bigl(\sum_{v\in V}\alpha_vX^v\Bigr)=\sum_{v\in V}\alpha_vv.
\]
Thus, $\varepsilon_0$ is the augmentation map. We denote its kernel by
$I[V]$. It is an ideal since $\varepsilon_0$ is a ring homomorphism, and it is
generated as a group by elements of the form $1+X^v$ for $v\in V$,
which we call \emph{$1$-fold Pfister elements}. For $m\geq1$, the
products
\[
(1+X^{v_1})\cdots(1+X^{v_m})\in \F_2[V]
\]
with $v_1$, \ldots, $v_m\in V$ are called \emph{$m$-fold Pfister
    elements}. They span the $m$-th power of $I[V]$, which we denote
by $I^m[V]$ to mimic the Witt ring notation. Observe that $0$ is an
$m$-fold Pfister element for all $m$, since the product above is $0$
if $v_1=0$. 

For $\xi=\sum_{v\in V}\alpha_vX^v\in\F_2[V]$ we define the
\emph{support} of $\xi$ by
\[
D(\xi)=\{v\in V\mid\alpha_v=1\}\subseteq V.
\]
This notation is inspired by the usual notation for the set of
represented values of a quadratic form. (See the proof of
Theorem~\ref{thm:3} below for an example of a field $E$ such that
$W(E)$ can be identified with a group algebra $\F_2[V]$ in such a way
that the support of any $\xi\in\F_2[V]$ is the set of represented
values of the corresponding anisotropic quadratic form.)

\begin{lem}
    \label{lem:Pfelem}
    Let $\xi\in \F_2[V]$ be a nonzero element, and let $d=\lvert
    D(\xi)\rvert$ be the cardinality of the support of $\xi$.
    \begin{enumerate}
    \item[(i)]
    If $\xi\in I[V]$, then $d\geq2$ and there are $1$-fold Pfister
    elements $\pi_1$, \ldots, $\pi_p$ such that
    \[
    \xi=\pi_1+\cdots+\pi_p\qquad\text{and}\qquad p\leq d.
    \]
    If moreover $0\in D(\xi)$, the same property holds with $p\leq d-1$.
    \item[(ii)]
    If $\xi\in I[V]$ and $\varepsilon_1(\xi)=0$, then $d\geq4$ and there exist
    $2$-fold Pfister
    elements $\pi_1$, \ldots, $\pi_p$ such that
    \[
    \xi=\pi_1+\cdots+\pi_p\qquad\text{and}\qquad p\leq d-2.
    \]
    If moreover $0\in D(\xi)$, the same property holds with $p\leq d-3$.
    \end{enumerate}
\end{lem}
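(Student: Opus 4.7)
For part~(i), the plan is direct. Since $\varepsilon_0(\xi)=0$ and the coefficients of $\xi$ lie in $\F_2$, the cardinality $d$ is even, so $d\geq 2$. I would then write $\xi$ as the sum of the $1$-fold Pfister elements $1+X^v$ indexed by $v\in D(\xi)\setminus\{0\}$ in the case $0\in D(\xi)$ (the $d-1$ constants add up to $1$ in $\F_2$ since $d-1$ is odd, matching the constant term of $\xi$), and indexed by $v\in D(\xi)$ otherwise (the $d$ constants cancel).

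For part~(ii), I would first note $d\geq 4$: $d$ is even and at least $2$ by~(i), while $d=2$ is ruled out since $\xi=X^u+X^v$ with $\varepsilon_1(\xi)=u+v=0$ would force $u=v$, contradicting $\xi\neq 0$. The decomposition is then proved by strong induction on $d$, treating the subcases $0\in D(\xi)$ and $0\notin D(\xi)$ separately. The base case $d=4$ is direct: if $0\in D(\xi)$, the three nonzero support elements satisfy $u_1+u_2+u_3=0$, so $\xi=(1+X^{u_1})(1+X^{u_2})$ is a single $2$-fold Pfister element; if $0\notin D(\xi)$, then $D(\xi)=\{u_1,u_2,u_3,u_4\}$ with $u_1+u_2=u_3+u_4$, and the identity $\xi=(1+X^{u_1})(1+X^{u_1+u_2})+(1+X^{u_3})(1+X^{u_3+u_4})$ exhibits $\xi$ as a sum of two $2$-fold Pfister elements, the ``extra'' terms $X^{u_1+u_2}$ and $X^{u_3+u_4}$ cancelling by the constraint.

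For the inductive step at $d\geq 6$, I would prove the two subcases in sequence. First, when $0\in D(\xi)$, I pick distinct nonzero $v_1,v_2\in D(\xi)$ and form $\eta=\xi+(1+X^{v_1})(1+X^{v_2})$; then $\eta$ lies in $I[V]\cap\ker\varepsilon_1$, has $0\notin D(\eta)$ (the constant terms cancel), and satisfies $|D(\eta)|\in\{d-2,d-4\}$. Applying the inductive hypothesis (the $0\notin D$ branch at the strictly smaller size, or observing $\eta=0$ when $\xi$ itself was a single Pfister) writes $\xi$ as a sum of at most $d-3$ Pfister elements. Second, when $0\notin D(\xi)$, I pick distinct $v_1,v_2\in D(\xi)$ and form $\eta=\xi+(1+X^{v_1})(1+X^{v_1+v_2})$; expanding the Pfister shows $\eta$ lies in $I[V]\cap\ker\varepsilon_1$, has $0\in D(\eta)$, and $|D(\eta)|\in\{d,d-2\}$. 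Applying the $0\in D$ case at size $|D(\eta)|$ (available at the same level $d$ by the previous step, or at $d-2$ by induction) decomposes $\xi$ into at most $d-2$ Pfister elements.

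The main obstacle is the bookkeeping of support sizes through cancellations: in each reduction, $|D(\eta)|$ takes its ``generic'' value or a strictly smaller one depending on whether the newly introduced support element $v_1+v_2$ collides with an existing element of $D(\xi)$, and in every subcase one must verify that the bound supplied by the inductive hypothesis yields the target bound for $\xi$. A secondary subtlety is sequencing the induction correctly: the $0\notin D$ step at level $d$ genuinely invokes the $0\in D$ step at the same level $d$, so the two subcases must be established in the proper order within each induction step.
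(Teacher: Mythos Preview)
Your proposal is correct and follows essentially the same approach as the paper: both argue part~(i) by the identity $\xi=\sum_{v\in D(\xi)}(1+X^v)$, and both prove part~(ii) by induction on $d$, subtracting a $2$-fold Pfister element supported on two chosen elements of $D(\xi)$ and treating the cases $0\in D(\xi)$ and $0\notin D(\xi)$ in that order. The only cosmetic differences are that you spell out the base case $d=4$ explicitly (the paper absorbs it into the induction via the remark ``$\xi'=0$ if $d=4$'') and that in the $0\notin D(\xi)$ step you write the Pfister element as $(1+X^{v_1})(1+X^{v_1+v_2})$, which expands to $1+X^{v_1}+X^{v_2}+X^{v_1+v_2}=(1+X^{v_1})(1+X^{v_2})$ and is therefore the very same element the paper uses.
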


\begin{proof}
    (i)
    We have $d\neq0$ since $\xi\neq0$, and $d$ is even since
    $\varepsilon_0(\xi)\equiv d\bmod2$ and $\xi\in I[V]$. Therefore, we have
    \[
    \xi=\sum_{v\in D(\xi)}X^v=\sum_{v\in D(\xi)}(1+X^v),
    \]
    proving that $\xi$ is a sum of $d$ terms that are $1$-fold Pfister
    elements. If $0\in D(\xi)$, one of these terms vanishes since
    $1+X^0=0$. Thus, (i) is proved.

    (ii)
    Suppose now $\xi\in I[V]$ and $\varepsilon_1(\xi)=0$. As in case~(i), $d$ is
    even. If $d=2$, the condition
    $\varepsilon_1(\xi)=0$ yields $\xi=0$. Therefore, $d\geq4$. The other
    assertions are proved by induction on $d$. Suppose first $0\in
    D(\xi)$. Since $d\geq4$ we may find in $D(\xi)$ two distinct nonzero
    vectors $u$, $v$. Define
    \[
    \xi'=(1+X^u)(1+X^v)+\xi.
    \]
    We have $\xi'\in I[V]$ and $\varepsilon_1(\xi')=0$. Moreover,
    \[
    D(\xi')\subseteq(D(\xi)\setminus\{0,u,v\})\cup\{u+v\},
    \]
    hence $\lvert D(\xi')\rvert\leq d-2$. By induction, there exist
    $2$-fold Pfister elements $\pi_1$, \ldots, $\pi_p$ such that
    \[
    \xi'=\pi_1+\cdots+\pi_p\qquad\text{and}\qquad p\leq d-4,
    \]
    ($\xi'=0$ if $d=4$). Then
    \[
    \xi=\pi_1+\cdots+\pi_p+(1+X^u)(1+X^v)
    \]
    and the number of terms on the right side is at most $d-3$. If
    $0\notin D(\xi)$ we may still define $\xi'$ as above, and we have
    \[
    0\in D(\xi')\subseteq(D(\xi)\setminus\{u,v\})\cup\{0,u+v\},
    \]
    hence $\lvert D(\xi')\rvert\leq d$. The arguments above show that
    there exist $2$-fold Pfister elements $\pi_1$, \ldots, $\pi_p$ such
    that
    \[
    \xi'=\pi_1+\cdots+\pi_p\qquad\text{and}\qquad p\leq d-3.
    \]
    Then
    \[
    \xi=\pi_1+\cdots+\pi_p+(1+X^u)(1+X^v)
    \]
    and the number of terms on the right side is at most $d-2$.
\end{proof}

\begin{cor}
    \label{cor:I2}
    $I^2[V]=\{\xi\in I[V]\mid \varepsilon_1(\xi)=0\}$.
\end{cor}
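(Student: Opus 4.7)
The plan is to prove the two inclusions separately, with the nontrivial direction being an immediate consequence of Lemma~\ref{lem:Pfelem}(ii).

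For the inclusion $I^2[V] \subseteq \{\xi \in I[V] \mid \varepsilon_1(\xi) = 0\}$, I would argue on generators. Since $I^2[V]$ is spanned as a group by $2$-fold Pfister elements, it is enough to check that every such element lies in the right-hand side. Expanding a typical $2$-fold Pfister element gives
\[
(1+X^u)(1+X^v) = 1 + X^u + X^v + X^{u+v},
\]
which visibly lies in $I[V]$ (it is the product of two elements of $I[V]$, and also one sees $\varepsilon_0 = 4 \equiv 0$), and applying $\varepsilon_1$ yields $0 + u + v + (u+v) = 0$ in $V$. Since $\varepsilon_1$ is a group homomorphism, it vanishes on any sum of such elements, establishing this inclusion.

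For the reverse inclusion, I would invoke Lemma~\ref{lem:Pfelem}(ii) directly. Let $\xi \in I[V]$ satisfy $\varepsilon_1(\xi) = 0$. If $\xi = 0$, then trivially $\xi \in I^2[V]$. Otherwise, the lemma furnishes $2$-fold Pfister elements $\pi_1,\ldots,\pi_p$ with $\xi = \pi_1 + \cdots + \pi_p$, and each $\pi_i$ lies in $I^2[V]$ by definition, so $\xi \in I^2[V]$.

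There is essentially no obstacle here: the statement is a direct corollary of Lemma~\ref{lem:Pfelem}(ii), combined with the easy generator-level verification that $\varepsilon_1$ annihilates every $2$-fold Pfister element. The only thing worth flagging is that the containment $I^2[V] \subseteq I[V]$ is automatic (since $I[V]$ is an ideal), so the check on $\varepsilon_1$ suffices for one direction, while the other direction really uses the full strength of part~(ii) of the lemma — namely, that the hypothesis $\varepsilon_1(\xi) = 0$ is strong enough to guarantee a decomposition into $2$-fold Pfister elements rather than just membership in the group they generate.
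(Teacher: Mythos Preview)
Your proof is correct and follows the same approach as the paper: one inclusion by checking that $\varepsilon_1$ vanishes on the $2$-fold Pfister generators of $I^2[V]$, and the other by direct appeal to Lemma~\ref{lem:Pfelem}(ii). The paper additionally cites Lemma~\ref{lem:Pfelem}(i) to justify that $I^2[V]$ is spanned by $2$-fold Pfister elements, but this was already noted in the text preceding the lemma, so your taking it as given is fine.
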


\begin{proof}
    Lemma~\ref{lem:Pfelem}(i) shows that $I[V]$ is spanned
    by $1$-fold Pfister elements, hence $I^2[V]$ is generated as
    a group by $2$-fold Pfister elements. Since these elements lie in
    the kernel of $\varepsilon_1$, it follows that
    \[
    I^2[V]\subseteq\{\xi\in I[V]\mid \varepsilon_1(\xi)=0\}.
    \]
    The reverse inclusion readily follows from Lemma~\ref{lem:Pfelem}(ii).
\end{proof}

For $\xi\in I^m[V]$ we define the $m$-Pfister number $\Pf_m(\xi)$ as
the minimal number of
terms in a decomposition of $\xi$ as a sum of $m$-fold Pfister
elements. In particular, $\Pf_m(0)=0$ for all $m\geq1$.

\begin{prop}
    \label{prop:Pf1}
    For every $\xi\in I[V]$ we have
    \[
    \Pf_1(\xi)=
    \begin{cases}
      \lvert D(\xi)\rvert&\text{if $0\notin D(\xi)$},\\
      \lvert D(\xi)\rvert-1&\text{if $0\in D(\xi)$}.
    \end{cases}
    \]
\end{prop}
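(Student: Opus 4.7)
The plan is to observe that the upper bounds $\Pf_1(\xi)\leq|D(\xi)|$ when $0\notin D(\xi)$ and $\Pf_1(\xi)\leq|D(\xi)|-1$ when $0\in D(\xi)$ are already supplied by Lemma~\ref{lem:Pfelem}(i), so only the matching lower bounds remain to be proved.

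To establish the lower bound, I would start from an arbitrary decomposition $\xi=\sum_{i=1}^p(1+X^{v_i})$ as a sum of $p=\Pf_1(\xi)$ $1$-fold Pfister elements and normalize it. The key observation is that $1+X^0=0$, so any term with $v_i=0$ may be discarded without changing $\xi$ or increasing $p$; hence I may assume $v_i\neq0$ for every $i$. Expanding in $\F_2[V]$ then gives
\[
\xi=(p\bmod 2)\cdot 1+\sum_{i=1}^p X^{v_i}.
\]
Reading off coefficients, the coefficient of $X^0$ in $\xi$ equals $p\bmod 2$, while for each $v\neq0$ the coefficient of $X^v$ equals the parity of the multiplicity of $v$ in the list $(v_1,\ldots,v_p)$. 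In particular every $v\in D(\xi)\setminus\{0\}$ must actually occur among the $v_i$'s, which forces $p\geq|D(\xi)\setminus\{0\}|$.

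Splitting into cases according to whether $0\in D(\xi)$ finishes the argument: if $0\notin D(\xi)$ then $|D(\xi)\setminus\{0\}|=|D(\xi)|$, and if $0\in D(\xi)$ then $|D(\xi)\setminus\{0\}|=|D(\xi)|-1$. Combined with Lemma~\ref{lem:Pfelem}(i) these inequalities give equality in each case. The argument is essentially bookkeeping and I do not anticipate any real obstacle; the only subtlety is remembering to discard the vanishing terms $1+X^0$ at the outset, so that the counting of nonzero Pfister summands is not inflated by junk.
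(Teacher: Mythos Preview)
Your proof is correct and follows essentially the same approach as the paper: take a minimal decomposition $\xi=\sum_{i=1}^p(1+X^{v_i})$ with all $v_i\neq0$, observe that $D(\xi)\subseteq\{0,v_1,\ldots,v_p\}$, and combine with Lemma~\ref{lem:Pfelem}(i). The paper phrases the lower bound as the containment $D(\xi)\subseteq\{0,v_1,\ldots,v_p\}$ rather than your multiplicity count, but the content is identical.
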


\begin{proof}
    Let $p=\Pf_1(\xi)$. Suppose $v_1$, \ldots, $v_p\in V$ are nonzero
    vectors such that
    \[
    \xi=\sum_{i=1}^p(1+X^{v_i})=p+\sum_{i=1}^p X^{v_i}.
    \]
    Then $D(\xi)\subseteq\{0,v_1,\ldots,v_p\}$, hence
    \[
    \lvert D(\xi)\rvert\leq
    \begin{cases}
      p&\text{if $0\notin D(\xi)$},\\
      p+1&\text{if $0\in D(\xi)$}.
    \end{cases}
    \]
    The reverse inequality follows from Lemma~\ref{lem:Pfelem}.
\end{proof}

We now turn to $2$-Pfister numbers.
\relax From Lemma~\ref{lem:Pfelem} it follows that for $\xi\neq0$ in
$I^2[V]$,
\begin{equation}
    \label{eq:upb}
    \Pf_2(\xi)\leq\lvert D(\xi)\rvert-2,\quad\text{and}\quad
    \Pf_2(\xi)\leq\lvert D(\xi)\rvert-3\text{ if $0\in D(\xi)$}.
\end{equation}

In the rest of this section, we explicitly construct elements for
which the upper bound is reached. The following general
observation is crucial for the proof: every linear map $\varphi\colon
V\to W$ between $\F_2$-vector spaces induces a ring homomorphism
$\varphi_*\colon\F_2[V]\to\F_2[W]$ by
\begin{equation}
  \label{eq:phi}
  \varphi_*\Bigl(\sum_{v\in V}\alpha_vX^v\Bigr)= \sum_{v\in V}
  \alpha_vX^{\varphi(v)}. 
\end{equation}
The homomorphism $\varphi_*$ maps $1$-fold Pfister elements in
$\F_2[V]$ to (possibly zero) $1$-fold Pfister elements in $\F_2[W]$,
hence also $m$-fold Pfister elements in $\F_2[V]$ to $m$-fold Pfister
elements in $\F_2[W]$, for every $m\geq1$. Consequently, for every
$\xi\in I^m[V]$ we have $\varphi_*(\xi)\in I^m[W]$ and
\begin{equation*}
  \Pf_m\bigl(\varphi_*(\xi)\bigr)\leq\Pf_m(\xi).
\end{equation*}

Now, let $V$ be an $\F_2$-vector space of finite dimension $n>1$, and let
$e=(e_i)_{i=1}^n$ be a base of $V$. We define $e_0=\sum_{i=1}^ne_i$
and
\[
\xi_e=n+1+\sum_{i=0}^nX^{e_i}\in\F_2[V].
\]
It is readily verified that $\varepsilon_0(\xi)=0$ and
$\varepsilon_1(\xi)=0$, so
$\xi_e\in I^2[V]$, and the support of $\xi_e$ is
\[
D(\xi_e)=
\begin{cases}
    \{0,e_0,e_1,\ldots,e_n\}&\text{if $n$ is even},\\
    \{e_0,e_1,\ldots,e_n\}&\text{if $n$ is odd}.
\end{cases}
\]
Therefore, \eqref{eq:upb} yields the same inequality when $n$ is odd
or even:
\begin{equation}
  \label{eq:upb1}
  \Pf_2(\xi_e)\leq n-1.
\end{equation}
The following proposition shows that $\Pf_2(\xi_e)$ reaches the bound
in \eqref{eq:upb}.

\begin{prop}
    \label{prop:Pf2}
    $\Pf_2(\xi_e)=n-1$.
\end{prop}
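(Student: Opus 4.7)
The upper bound $\Pf_2(\xi_e)\leq n-1$ is supplied by \eqref{eq:upb1}, so the plan is to establish only the matching lower bound $\Pf_2(\xi_e)\geq n-1$, by induction on $n\geq 2$. For the base case $n=2$, one verifies directly that $\xi_e=1+X^{e_1}+X^{e_2}+X^{e_1+e_2}=(1+X^{e_1})(1+X^{e_2})$ is a single nonzero $2$-fold Pfister element, so $\Pf_2(\xi_e)=1=n-1$.

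For the inductive step ($n\geq 3$), I would exploit the functoriality principle described just above the proposition. Consider the projection $\phi\colon V\to V'$ with $V'=V/\langle e_n\rangle$, and set $e'_i=\phi(e_i)$ for $1\leq i\leq n-1$; these vectors form a basis $e'$ of $V'$. A short calculation, using $\phi(e_n)=0$ and $\phi(e_0)=e'_1+\cdots+e'_{n-1}$, shows that the induced ring homomorphism $\phi_*$ from \eqref{eq:phi} sends $\xi_e$ to the analogous element $\xi_{e'}$ of $\F_2[V']$.

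Now take a minimal decomposition $\xi_e=\sum_{j=1}^{p}\pi_j$ with $\pi_j=(1+X^{u_j})(1+X^{v_j})$ and $p=\Pf_2(\xi_e)$, and apply $\phi_*$ to obtain $\xi_{e'}=\sum_{j=1}^{p}\phi_*(\pi_j)$, a sum of (possibly zero) $2$-fold Pfister elements in $\F_2[V']$. The key step is to show that at least one of the summands $\phi_*(\pi_j)$ vanishes: this happens exactly when $\phi(u_j)$ and $\phi(v_j)$ are linearly dependent in $V'$, equivalently when $e_n\in\{u_j,v_j,u_j+v_j\}=D(\pi_j)\setminus\{0\}$. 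Since $D(\xi_e)=D(\pi_1)\triangle\cdots\triangle D(\pi_p)$ and the basis vector $e_n$ lies in $D(\xi_e)$, parity forces $e_n$ to appear in an odd number $N$ of the sets $D(\pi_j)$; in particular $N\geq 1$. Therefore $\Pf_2(\xi_{e'})\leq p-N\leq p-1$, and the inductive hypothesis $\Pf_2(\xi_{e'})\geq n-2$ yields $p\geq n-1$, as required.

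I do not anticipate a real obstacle: the choice of quotient (by the line spanned by a single basis vector) is essentially forced by the need to preserve the shape of $\xi_e$, and the parity count is clean once one observes that the support of a nonzero $2$-fold Pfister element is a $2$-dimensional linear subspace of $V$, which $\phi_*$ annihilates exactly when that subspace meets $\ker\phi$ nontrivially.
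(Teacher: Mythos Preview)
Your argument is correct and follows essentially the same route as the paper: both proofs induct on $n$, pass to the analogous element in one fewer variable via the map killing $e_n$, and observe that at least one Pfister summand in a minimal decomposition of $\xi_e$ dies under this map because $e_n$ lies in its support. The only cosmetic differences are that the paper uses the complementary subspace $W=\operatorname{span}(e_1,\dots,e_{n-1})$ rather than the quotient $V/\langle e_n\rangle$, treats $n=3$ as a separate base case (which, as your argument shows, is not actually needed), and invokes the containment $D(\xi_e)\subseteq\bigcup_j D(\pi_j)$ instead of your slightly sharper parity observation.
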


\begin{proof}
    We use induction on $n$. If $n=2$, we have
    \[
    \xi_e=(1+X^{e_1})(1+X^{e_2}),
    \]
    so $\Pf_2(\xi_e)=1$. If $n=3$, then
    \[
    \xi_e=X^{e_1}+X^{e_2}+X^{e_3}+X^{e_1}X^{e_2}X^{e_3}.
    \]
    This element is not a $2$-fold Pfister element since $0\notin
    D(\xi_e)$, hence $\Pf_2(\xi_e)>1$. On the other hand,
    $\Pf_2(\xi_e)\leq2$ by \eqref{eq:upb1}, hence $\Pf_2(\xi_e)=2$.

    For the rest of the proof, suppose $n>3$. Let $p=\Pf_2(\xi_e)$ and let
    $\pi_1$, \ldots, $\pi_p$ be $2$-fold Pfister elements such that
    \begin{equation}
      \label{eq:xi}
      \xi_e=\pi_1+\cdots+\pi_p.
    \end{equation}
    We have $D(\xi_e)\subseteq\bigcup_{i=1}^pD(\pi_i)$, hence $e_n\in
    D(\pi_i)$ for some $i=1$, \ldots, $p$. Renumbering, we may assume
    $e_n\in D(\pi_p)$, hence
    \begin{equation}
    \label{eq:pip}
    \pi_p=1+X^{e_n}+X^v+X^{e_n+v}\qquad\text{for some $v\in V$}.
    \end{equation}
    Let $W\subseteq V$ be the $\F_2$-span of $e_1$, \ldots, $e_{n-1}$,
    and let $f_0=\sum_{i=1}^{n-1}e_i\in W$. Clearly,
    $f=(e_i)_{i=1}^{n-1}$ is a base of $W$, and the element
    $\xi_{f}\in\F_2[W]$ built on the same model as $\xi_e$ is
    \[
    \xi_f=n+X^{f_0}+\sum_{i=1}^{n-1}X^{e_i}.
    \]
    Consider the linear map $\varphi\colon V\to W$ defined by
    \[
    \varphi(e_i)=
    \begin{cases}
      e_i&\text{for $i=1$, \ldots, $n-1$},\\
      0&\text{for $i=n$}.
    \end{cases}
    \]
    The ring homomorphism $\varphi_*\colon\F_2[V]\to\F_2[W]$ induced
    by $\varphi$ as in \eqref{eq:phi} above satisfies
    $\varphi_*(X^{e_n})=1$. Since $\varphi(e_0)=f_0$, it follows that
    $\varphi_*(\xi_e)=\xi_{f}$, hence \eqref{eq:xi} yields
    \[
    \xi_f=\varphi_*(\pi_1)+\cdots+\varphi_*(\pi_p).
    \]
    In view of \eqref{eq:pip}, we have $\varphi_*(\pi_p)=0$, hence the
    preceding equation shows that $\Pf_2(\xi_f)\leq p-1$. Since $\dim
    W=n-1$, the induction hypothesis yields $\Pf_2(\xi_f)=n-2$,
    hence $n-1\leq p$. The reverse inequality holds by \eqref{eq:upb1},
    hence the proposition is proved.
\end{proof}

\section{Pfister numbers of generic forms}
\label{sec:generic}

Let $k$ be an arbitrary field of characteristic different from $2$
containing a square root of $-1$, and let $V_k=k^\times/k^{\times2}$
be the group of square classes in $k$, which we view as an
$\F_2$-vector space. The map
\[
\Psi\colon V_k\to W(k)
\]
defined by $\Psi(a\,k^{\times2})=\qform{a}$ for $a\in k^\times$ is
multiplicative, hence it
induces a surjective $\F_2$-algebra homomorphism
\[
\Psi_*\colon\F_2[V_k]\to W(k).
\]
The map $\Psi_*$ carries $1$-fold Pfister elements in $\F_2[V_k]$ to
$1$-fold Pfister forms in $W(k)$, hence also $m$-fold Pfister elements
to $m$-fold Pfister forms for all $m\geq1$. Therefore,
$\Psi_*(I^m[V_k])=I^m(k)$ and we have
\begin{equation}
  \label{eq:PfPsi}
  \Pf_m\bigl(\Psi_*(\xi)\bigr)\leq\Pf_m(\xi) \qquad\text{for all
    $\xi\in I^m[V_k]$}.
\end{equation}
We may then use Lemma~\ref{lem:Pfelem} to give a short proof of
Proposition~14 of \cite{BRV}, including a minor refinement:

\begin{prop}
    \label{prop:BRV}
    Let $q$ be a quadratic form of dimension $n$ over a field $k$
    containing a square root of $-1$.
    \begin{enumerate}
    \item[(i)]
    If $q\in I(k)$, then $\Pf_1(q)\leq n$.
    If moreover $q$ represents $1$, then $\Pf_1(q)\leq n-1$.
    \item[(ii)]
    If $q\in I^2(k)$, then $\Pf_2(q)\leq
    n-2$. If moreover $q$ represents $1$, then $\Pf_2(q)\leq n-3$.
    \end{enumerate}
\end{prop}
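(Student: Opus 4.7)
The plan is to lift everything to the combinatorial setting of \S\ref{sec:combi} via $\Psi_*$ and then invoke Lemma~\ref{lem:Pfelem}. Given a diagonalization $q=\qform{a_1,\ldots,a_n}$, I set
\[
\xi_q=\sum_{i=1}^nX^{\overline{a_i}}\in\F_2[V_k],
\]
where $\overline{a_i}$ denotes the square class of $a_i$. By construction $\Psi_*(\xi_q)=q$ in $W(k)$, and the support $D(\xi_q)$ consists of the distinct square classes among $\overline{a_1},\ldots,\overline{a_n}$, so $\lvert D(\xi_q)\rvert\leq n$. If in addition $q$ represents $1$, I choose the diagonalization with $a_1=1$, which ensures $0\in D(\xi_q)$.

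For part~(i), the assumption $q\in I(k)$ forces $n$ to be even, so $\varepsilon_0(\xi_q)=n\bmod2=0$, that is, $\xi_q\in I[V_k]$. Lemma~\ref{lem:Pfelem}(i) then gives $1$-fold Pfister elements $\pi_1,\ldots,\pi_p$ with $\xi_q=\pi_1+\cdots+\pi_p$ and $p\leq\lvert D(\xi_q)\rvert\leq n$ (and $p\leq n-1$ if $0\in D(\xi_q)$). Applying $\Psi_*$ yields the desired decomposition of $q$ as a sum of the same number of $1$-fold Pfister forms, by \eqref{eq:PfPsi}.

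For part~(ii), I need the translation $q\in I^2(k)\iff\xi_q\in I^2[V_k]$. We already have $\xi_q\in I[V_k]$ as in~(i). Since $-1\in k^{\times2}$, the signed discriminant and the determinant coincide, and triviality of the discriminant of $q$ amounts to $a_1\cdots a_n\in k^{\times2}$, i.e.\ $\sum_{i=1}^n\overline{a_i}=0$ in $V_k$. This is precisely the statement $\varepsilon_1(\xi_q)=0$, so by Corollary~\ref{cor:I2}, $\xi_q\in I^2[V_k]$. Lemma~\ref{lem:Pfelem}(ii) then produces $2$-fold Pfister elements $\pi_1,\ldots,\pi_p$ with $\xi_q=\pi_1+\cdots+\pi_p$ and $p\leq\lvert D(\xi_q)\rvert-2\leq n-2$ (improved to $p\leq n-3$ when $0\in D(\xi_q)$). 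Once more \eqref{eq:PfPsi} transports this to a decomposition of $q$ into the corresponding number of $2$-fold Pfister forms, completing the proof.

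There is no real obstacle here; the only subtle point is the identification of $q\in I^2(k)$ with $\xi_q\in I^2[V_k]$, which relies crucially on $-1\in k^{\times2}$ so that $\varepsilon_1(\xi_q)$ records exactly the (signed) discriminant of $q$. Everything else is bookkeeping through $\Psi_*$.
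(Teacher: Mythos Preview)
Your proof is correct and follows exactly the same route as the paper: lift a diagonalization of $q$ to an element $\xi$ of $\F_2[V_k]$, observe that $\lvert D(\xi)\rvert\leq n$ (with $0\in D(\xi)$ when $q$ represents $1$), check that $q\in I(k)$ (resp.\ $q\in I^2(k)$) forces $\xi\in I[V_k]$ (resp.\ $\varepsilon_1(\xi)=0$, hence $\xi\in I^2[V_k]$ by Corollary~\ref{cor:I2}), then apply Lemma~\ref{lem:Pfelem} and push the resulting Pfister decomposition down via $\Psi_*$ and \eqref{eq:PfPsi}. The only minor slip is the description of $D(\xi_q)$ as ``the distinct square classes among the $\overline{a_i}$''---strictly speaking it is those appearing with odd multiplicity---but this is harmless since you only use $\lvert D(\xi_q)\rvert\leq n$, and the paper phrases it the same way.
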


\begin{proof}
    Let $q=\qform{a_1,\ldots, a_n}$. Consider then
    \[
    \xi=(a_1\,k^{\times2})+\cdots+(a_n\,k^{\times2})\in\F_2[V_k].
    \]
    We have $\Psi_*(\xi)=q$ and $D(\xi)=\{a_1\,k^{\times2},\ldots,
    a_n\,k^{\times2}\}$, so $\lvert D(\xi)\rvert\leq n$. If $q\in I(k)$,
    then $n$ is even hence $\xi\in I[V_k]$. Lemma~\ref{lem:Pfelem}(i)
    then yields $\Pf_1(\xi)\leq n$, and by \eqref{eq:PfPsi} it follows
    that $\Pf_1(q)\leq n$. If $q$ represents~$1$, then we may assume
    $a_1=1$, hence $D(\xi)$ contains the zero element of
    $V_k$. Lemma~\ref{lem:Pfelem}(i) then yields $\Pf_1(\xi)\leq n-1$,
    and by \eqref{eq:PfPsi} it follows that $\Pf_1(q)\leq n-1$.

    If $q\in I^2(k)$, then $a_1\ldots a_n\in k^{\times2}$ hence
    $\varepsilon_1(\xi)=0$. By Corollary~\ref{cor:I2} we have $\xi\in
    I^2[V_k]$, and Lemma~\ref{lem:Pfelem}(ii) yields $\Pf_2(\xi)\leq
    n-2$. Therefore, by \eqref{eq:PfPsi} we get $\Pf_2(q)\leq
    n-2$. Again, if $q$ represents~$1$ we may assume $0\in D(\xi)$,
    and the preceding inequalities can be strengthened to
    \[
    \Pf_2(q)\leq\Pf_2(\xi)\leq n-3.
    \]
\end{proof}

For the rest of this section, fix an arbitrary integer
$n\geq2$. Consider $n$ independent indeterminates $x_1$, \ldots, $x_n$
over $k$ and let 
\[
x_0=x_1\cdots x_n.
\]
Over the field $K=k(x_1,\ldots,x_n)$, we consider the following
quadratic forms:
\begin{align*}
    q & =\qform{x_1,\ldots,x_n},& q_0& =\qform{x_0,x_1,\ldots, x_n},\\
    q'& =\qform{1,x_1,\ldots, x_n},& q'_0& =\qform{1,x_0,x_1,\ldots, x_n}.
\end{align*}
If $n$ is even, then $q\in I(K)$ and $q'_0\in I^2(K)$. If $n$ is odd,
then $q'\in I(K)$ and $q_0\in I^2(K)$.

\begin{thm}
    \label{thm:3}
    If $n$ is even, then
    \[
    \Pf_1(q)=n\qquad\text{and}\qquad\Pf_2(q'_0)=n-1.
    \]
    If $n$ is odd, then
    \[
    \Pf_1(q')=n\qquad\text{and}\qquad\Pf_2(q_0)=n-1.
    \]
\end{thm}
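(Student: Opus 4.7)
The upper bounds $\Pf_1(q),\Pf_1(q')\leq n$ and $\Pf_2(q_0),\Pf_2(q'_0)\leq n-1$ are immediate from Proposition~\ref{prop:BRV}, noting that $q'$ and $q'_0$ represent~$1$ so the sharper bounds apply. The task is to establish the matching lower bounds. The plan is to push each form forward to a field $E$ over which $W(E)$ can be identified with a group algebra $\F_2[V]$ in such a way that the images of the four forms are exactly the combinatorial elements analyzed in Section~\ref{sec:combi}; Propositions~\ref{prop:Pf1} and~\ref{prop:Pf2} will then close the argument, since Pfister numbers cannot grow along the composite ring homomorphism $W(K)\to W(E)\cong\F_2[V]$.

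Concretely, let $k_0\supseteq k$ be an algebraic closure of $k$ and set
\[
E \;=\; k_0((x_1))((x_2))\cdots((x_n)),
\]
into which $K=k(x_1,\ldots,x_n)$ embeds naturally. Since $k_0$ is quadratically closed, $W(k_0)=\F_2$, and iterating Springer's theorem for a complete discretely valued field of residue characteristic $\neq2$ yields an isomorphism $W(E)\cong \F_2[V]$, where $V=V_E\cong\F_2^n$ has base $e_i=x_iE^{\times2}$ for $i=1,\ldots,n$, and $\qform{a}$ corresponds to $X^{\varphi(a)}$ under this identification, $\varphi\colon E^\times\to V$ being the square-class projection.

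Computing the images along $W(K)\to W(E)\cong\F_2[V]$, one finds that $q$ becomes $\sum_{i=1}^n X^{e_i}$, whose support has $n$ elements and misses~$0$; that $q'$ becomes $1+\sum_{i=1}^n X^{e_i}$, whose support has $n+1$ elements and contains~$0$; and that both $q_0$ and $q'_0$ become the element $\xi_e$ of \S\ref{sec:combi} attached to the base $e=(e_i)_{i=1}^n$, using that $x_0=x_1\cdots x_n$ maps to $e_0=e_1+\cdots+e_n$ and that the constant term $n+1\bmod 2$ matches the parity of $n$. Proposition~\ref{prop:Pf1} then gives $\Pf_1=n$ for the images of $q$ and $q'$, while Proposition~\ref{prop:Pf2} gives $\Pf_2(\xi_e)=n-1$; combined with the functoriality of Pfister numbers along the homomorphism $\varphi_*$ in \eqref{eq:phi}, this yields $\Pf_1(q),\Pf_1(q')\geq n$ and $\Pf_2(q_0),\Pf_2(q'_0)\geq n-1$.

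The main obstacle is the isomorphism $W(E)\cong\F_2[V]$: one must verify inductively that, at each stage of the Laurent series tower, a complete discretely valued field whose residue field has Witt ring a group algebra $\F_2[V']$ has Witt ring $\F_2[V'\oplus\F_2]$, with the new $\F_2$ factor generated by the class of the uniformizer. Once this is in hand, the remaining work---matching the four forms over $K$ with the specific elements of $\F_2[V]$ from \S\ref{sec:combi}---is routine bookkeeping in square classes.
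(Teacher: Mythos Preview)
Your proposal is correct and follows essentially the same route as the paper: embed $K$ into the iterated Laurent series field $E=k_{\text{alg}}((x_1))\cdots((x_n))$, use Springer's theorem iteratively to identify $W(E)\cong\F_2[(\mathbb{Z}/2\mathbb{Z})^n]$, match the forms with the combinatorial elements of \S\ref{sec:combi}, and invoke Propositions~\ref{prop:Pf1} and~\ref{prop:Pf2} for the lower bounds while Proposition~\ref{prop:BRV} supplies the upper bounds. The only imprecision is the phrase ``both $q_0$ and $q'_0$ become $\xi_e$'': strictly, $q_0$ maps to $\xi_e$ when $n$ is odd and $q'_0$ maps to $\xi_e$ when $n$ is even, which is exactly what the theorem needs in each parity case.
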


\begin{proof}
    Let $k_{\text{alg}}$ be an algebraic closure of $k$. Embed $k$ in
    the field of iterated Laurent series
    $E=k_{\text{alg}}((x_1))\cdots((x_n))$. Applying Springer's theorem
    in \cite[Cor.~VI.1.7]{L} recursively, we obtain a ring isomorphism
    \begin{equation*}
    \Theta\colon\,W(E)\stackrel{\sim}{\to}
    \F_2[(\mathbb{Z}/2\mathbb{Z})^n],
    \end{equation*}
    which maps
    $W(k_{\text{alg}})$ onto $\F_2$ and maps
    the quadratic form $\qform{x_i}$ to $X^{e_i}$, where
    $e_i$ is the $i$-th element in the standard base of
    $(\mathbb{Z}/2\mathbb{Z})^n$ as an $\F_2$-vector space, for $i=1$,
    \ldots, $n$. Note that the $(x_1,\ldots,x_n)$-adic valuation on
    $E$ yields an isomorphism
    \[
    V_E=E^{\times}/E^{\times2}\simeq(\mathbb{Z}/2\mathbb{Z})^n
    \]
    which maps $x_i\,E^{\times2}$ to $e_i$ for $i=1$, \ldots,
    $n$. Using this isomorphism as an identification, we may view
    $\Theta$ as the inverse map of $\Psi_*\colon\F_2[V_E]\to W(E)$,
    which is thus an isomorphism in this case.

    Letting $e_0=\sum_{i=1}^ne_i$, we have
    \begin{align*}
      \Theta(q_E) & =\sum_{i=1}^n X^{e_i},& \Theta(q_{0E}) &
      =\sum_{i=0}^nX^{e_i},\\
      \Theta(q'_E) & =1+\sum_{i=1}^n X^{e_i},& \Theta(q'_{0E}) &
      =1+\sum_{i=0}^nX^{e_i},
    \end{align*}
    hence in the notation of \S\ref{sec:combi} with
    $V=(\mathbb{Z}/2\mathbb{Z})^n$, we have
    \[
    \xi_e=
    \begin{cases}
      \Theta(q_{0E})&\text{if $n$ is odd},\\
      \Theta(q'_{0E})&\text{if $n$ is even}.
    \end{cases}
    \]
    The isomorphism $\Theta$ maps $m$-fold Pfister forms in $W(E)$ to
    $m$-fold Pfister elements in $\F_2[V]$, hence it preserves
    $m$-Pfister numbers. Therefore, Proposition~\ref{prop:Pf2} yields
    \[
    \Pf_2(q_{0E})=n-1\quad\text{if $n$ is odd}\qquad\text{and}\qquad
    \Pf_2(q'_{0E})=n-1\quad\text{if $n$ is even}.
    \]
    Similarly, Proposition~\ref{prop:Pf1} yields
    \[
    \Pf_1(q_{E})=n\quad\text{if $n$ is even}\qquad\text{and}\qquad
    \Pf_1(q'_{E})=n\quad\text{if $n$ is odd}.
    \]
    Since scalar extension preserves $m$-Pfister forms, it follows that
    \[
    \Pf_1(q)\geq n\quad\text{and}\quad\Pf_2(q'_0)\geq n-1\qquad\text{if
      $n$ is even},
    \]
    \[
    \Pf_1(q')\geq n\quad\text{and}\quad\Pf_2(q_0)\geq n-1\qquad\text{if
      $n$ is odd}.
    \]
    The reverse inequalities follow from Proposition~\ref{prop:BRV}.
\end{proof}

\begin{cor}
    \label{cor:main}
    $\Pf_k(1,m)=m$ for any even integer $m\geq2$ and $\Pf_k(2,m)=m-2$
    for any even integer $m\geq4$.
\end{cor}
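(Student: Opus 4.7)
The plan is to observe that Corollary~\ref{cor:main} follows almost immediately by combining the upper bounds of Proposition~\ref{prop:BRV} with the lower bounds of Theorem~\ref{thm:3}; the only real content is matching parities correctly so that a form of the required dimension is produced by Theorem~\ref{thm:3}.

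First I would handle the upper bounds. Proposition~\ref{prop:BRV}(i) gives $\Pf_1(q)\leq n$ for every $q\in I(K)$ of dimension $n$, for every field extension $K/k$ containing a square root of $-1$ (which $K$ does since $k$ does); taking the supremum, $\Pf_k(1,m)\leq m$. Similarly, Proposition~\ref{prop:BRV}(ii) yields $\Pf_k(2,m)\leq m-2$.

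Next I would produce the matching lower bounds by exhibiting explicit forms of each dimension whose Pfister numbers equal the upper bounds, using Theorem~\ref{thm:3} applied over $K=k(x_1,\ldots,x_n)$. For $\Pf_k(1,m)$ with $m\geq 2$ even, I take $n=m$ (even): the form $q=\qform{x_1,\ldots,x_m}$ has dimension $m$, lies in $I(K)$, and satisfies $\Pf_1(q)=m$ by Theorem~\ref{thm:3}, forcing $\Pf_k(1,m)\geq m$. For $\Pf_k(2,m)$ with $m\geq 4$ even, I take $n=m-1$ (odd): then $q_0=\qform{x_0,x_1,\ldots,x_{m-1}}$ has dimension $m$, lies in $I^2(K)$, and satisfies $\Pf_2(q_0)=n-1=m-2$ by Theorem~\ref{thm:3}, forcing $\Pf_k(2,m)\geq m-2$. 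Combining the two bounds gives the asserted equalities.

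There is no real obstacle: the work is essentially bookkeeping with parities of $n$ and $m$, since Theorem~\ref{thm:3} has been stated precisely so that for each even $m\geq 2$ (resp.\ each even $m\geq 4$) one of its two cases produces a form of dimension $m$ in $I(K)$ (resp.\ $I^2(K)$) achieving the Proposition~\ref{prop:BRV} bound. The hypothesis $-1\in k^{\times 2}$ is preserved under the purely transcendental extension $K/k$, so the assumptions of Theorem~\ref{thm:3} and Proposition~\ref{prop:BRV} are satisfied throughout.
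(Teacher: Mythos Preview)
Your proposal is correct and follows essentially the same approach as the paper: use Theorem~\ref{thm:3} with $n=m$ even (and the form $q$) for the $\Pf_k(1,m)$ lower bound, and with $n=m-1$ odd (and the form $q_0$) for the $\Pf_k(2,m)$ lower bound, then invoke Proposition~\ref{prop:BRV} for the matching upper bounds. The parity bookkeeping is exactly right.
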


\begin{proof}
    For $m$ even, $m\geq2$, the form $q$ above (with $n=m$) has
    dimension $m$ and satisfies
    $q\in I(F)$ and $\Pf_1(q)=m$, so $\Pf_k(1,m)\geq m$. Similarly, for
    $m$ even, $m\geq4$, the form $q_0$ above (with $n=m-1$) has
    dimension $m$ and satisfies $q_0\in I^2(F)$ and $\Pf_2(q_0)=m-2$, so
    $\Pf_k(2,m)\geq m-2$. The reverse inequalities follow from
    \cite[Prop.~14]{BRV} (see the Introduction or
    Proposition~\ref{prop:BRV}).
\end{proof}

\begin{rem}
    \label{rem}
    A form with the same $2$-Pfister number as $q'_0$ can be obtained by
    scaling $q_0$: we have
    \[
    \qform{x_1}q_0=\qform{1,x_1x_2,\ldots, x_1x_n,x_0x_1}
    \]
    and $x_1x_2$, \ldots, $x_1x_n$ may be regarded as independent
    indeterminates. If $n$ is odd we have
    \[
    x_0x_1\equiv(x_1x_2)\cdots(x_1x_n)\bmod K^{\times2},
    \]
    hence $\qform{x_1}q_0$ is isometric to a quadratic form like $q'_0$
    in the indeterminates $x_1x_2$, \ldots, $x_1x_n$. Embedding $K$ in
    $E$ as in the proof of Theorem~\ref{thm:3}, we obtain
    $\Pf_2(\qform{x_1}q_0)=n-2$. Details are left to the reader.
\end{rem}

\section{Low-dimensional forms}
\label{sec:low}

Let $k$ be an arbitrary field of characteristic different from~$2$
containing a square root of $-1$. In this section, we obtain some
information on the $2$-Pfister number of quadratic forms of
dimension~$4$ or $6$ over $k$.
\medbreak\par
The case of anisotropic quadratic forms $q\in I^2(k)$ of dimension~$4$
is clear: if $q$ represents~$1$, then $q$ is a $2$-fold Pfister form,
so $\Pf_2(q)=1$. On the other hand, if $q$ does not represent~$1$,
then $\Pf_2(q)>1$ and Proposition~\ref{prop:BRV} yields $\Pf_2(q)=2$.
\medbreak\par
We next consider anisotropic forms of dimension~$6$ in $I^2(k)$. Of
course, $\Pf_2(q)>1$ for any such form $q$. If $q$ represents~$1$, it
follows from Proposition~\ref{prop:BRV} that $\Pf_2(q)=2$ or $3$. The
Stiefel-Whitney invariant $w_4(q)\in H^4(k,\mu_2)$ discriminates
between the two cases, as the next proposition shows. (See
\cite[\S3]{Mi} or \cite[\S17]{GMS}
for a discussion of Stiefel-Whitney invariants of quadratic forms.)

\begin{prop}
    \label{prop:sw}
    Let $q$ be an anisotropic quadratic form of dimension~$6$. Assume
    $q\in I^2(k)$ and $q$ represents~$1$. If $w_4(q)=0$, then
    $\Pf_2(q)=2$. If $w_4(q)\neq0$, then $\Pf_2(q)=3$.
\end{prop}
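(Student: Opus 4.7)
Observe first that $\Pf_2(q)\geq 2$ since $q$ is $6$-dimensional (so not a single $2$-fold Pfister form), and Proposition~\ref{prop:BRV}(ii) gives $\Pf_2(q)\leq 3$ because $q$ represents~$1$; hence $\Pf_2(q)\in\{2,3\}$ and the two alternatives in the proposition are complementary. So it suffices to prove $\Pf_2(q)=2\iff w_4(q)=0$.

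I would begin with a preliminary computation of $w_4(q)$ in a convenient presentation. Since $q$ represents~$1$ and has trivial discriminant, one can write $q\simeq\qform{1,a,b,c,d,abcd}$. Expanding the total Stiefel--Whitney class $w(q)=\prod_i(1+(a_i))$ in $H^*(k,\F_2)$ and repeatedly using $(x)^2=0$ (which holds because $-1\in k^{\times2}$) makes most terms collapse, leaving $w_4(q)=(a)(b)(c)(d)$; in particular, $w_4(q)=0$ if and only if $\pform{a,b,c,d}=0$ in $W(k)$.

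For the forward direction, suppose $q=\pi_1+\pi_2$ in $W(k)$ with $\pi_1,\pi_2$ two-fold Pfister forms. Since $\pi_1\perp\pi_2$ is $8$-dimensional and Witt-equivalent to the $6$-dimensional $q$, we have $\pi_1\perp\pi_2\simeq q\perp\mathbb{H}$. Multiplicativity of the total Stiefel--Whitney class, together with $w(\mathbb{H})=1$ and $w(\pi)=1+c(\pi)$ for any $2$-fold Pfister $\pi$ (a direct calculation using $(x)^2=0$ shows that all higher Stiefel--Whitney classes vanish), gives $w_4(q)=c(\pi_1)\cup c(\pi_2)$. To show this vanishes, use that $q$ represents~$1$: a representation $1=\pi_1(y_1)+\pi_2(y_2)$ with $y_1,y_2\ne 0$ produces $u\in D(\pi_1)$, $v\in D(\pi_2)$ with $u+v=1$ (after ruling out a handful of degenerate cases that would force $\dim q\ne 6$). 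By the slot property of Pfister forms, $\pi_1=\pform{u,s}$ and $\pi_2=\pform{v,t}$ for some $s,t\in k^\times$. Steinberg's relation gives $(u)(v)=0$, hence $\pform{u,v}=0$ in $W(k)$, so $\pi_1\otimes\pi_2=\pform{u,v}\otimes\pform{s,t}=0$ and $w_4(q)=c(\pi_1)\cup c(\pi_2)=0$.

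For the backward direction, assume $w_4(q)=0$, so $\pform{a,b,c,d}=0$ in $W(k)$, i.e., the $2$-fold Pfisters $\pform{a,b}$ and $\pform{c,d}$ are linked. Since the anisotropy of $q$ rules out either of them being hyperbolic, the common-slot lemma gives $u,s,t\in k^\times$ with $\pform{a,b}=\pform{u,s}$ and $\pform{c,d}=\pform{u,t}$. A short Witt-ring calculation yields the $3$-term decomposition $q=\pform{a,b}+\pform{c,d}+\pform{ab,cd}$. Combining the first two Pfisters through the common slot and using $\pform{u,s}+\pform{u,t}=s\cdot\pform{u,st}$ in $W(k)$, one obtains $q=s\cdot\pform{u,st}+\pform{ab,cd}$. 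The main obstacle is the final step: $s\cdot\pform{u,st}$ must be upgraded to a genuine $2$-fold Pfister form, which is possible precisely when $(u)(s)(t)=0$ in $H^3(k,\F_2)$, equivalently $s\in D(\pform{u,st})$. The plan is to use the freedom in choosing the common slot~$u$ (Pfister forms have many presentations), together with the hypothesis that $q$ represents~$1$, to arrange this vanishing; once $s\cdot\pform{u,st}=\pform{u,st}$ the decomposition $q=\pform{u,st}+\pform{ab,cd}$ gives $\Pf_2(q)\le 2$.
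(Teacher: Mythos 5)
Your reduction to ``$\Pf_2(q)=2\iff w_4(q)=0$'' and both $w_4$-computations are fine, but each direction of your equivalence has a real problem. In the forward direction, the slot property is misquoted: from $u\in D(\pi_1)$ you cannot conclude $\pi_1\simeq\pform{u,s}$; that requires $u$ to be represented by the \emph{pure part} of $\pi_1$ (for instance $\pform{x,y}$ over $\mathbb{C}(x,y)$ represents $1+x$, yet $k(\sqrt{1+x})$ does not split $(x,y)$, so $\pform{x,y}\not\simeq\pform{1+x,s}$ for any $s$). Moreover your degenerate cases are not ones ``that would force $\dim q\ne6$'': every Pfister form represents $1$, so the representation $1=\pi_1(y_1)+\pi_2(0)$ always exists. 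Both defects are repairable by representing $1$ by $q$ itself: since $q\simeq\qform{x_1,x_2,x_1x_2}\perp\qform{y_1,y_2,y_1y_2}$ is anisotropic, any representation of $1$ has both components nonzero (a vanishing component would make $\pi_1$ or $\pi_2$ isotropic, hence hyperbolic, contradicting $\dim q=6$), and then $u$, $v=1-u$ are values of the pure parts, the slot property applies, and Steinberg gives $(u)\cup(v)=0$, hence $w_4(q)=0$. The paper's own argument is shorter still: $\qform{1}\perp q$ is an isotropic subform of $\pform{x_1,x_2,y_1,y_2}$, so this $4$-fold Pfister form is hyperbolic and Arason's theorem kills the cup product.

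The backward direction, however, is not a proof. First, ``$\pform{a,b,c,d}=0$, i.e.\ $\pform{a,b}$ and $\pform{c,d}$ are linked'' is an invalid inference: linkage of the two $2$-fold Pfister forms means $\qform{a,b,ab,c,d,cd}$ is isotropic, equivalently $(a,b)_k\otimes(c,d)_k$ is not division, and this is strictly stronger than hyperbolicity of the product $\pform{a,b,c,d}$. Over any field with $H^4(k,\mu_2)=0$ that carries a division biquaternion algebra (e.g.\ $\mathbb{Q}_p(t)$ with $p\equiv1\bmod4$) the cup product $(a)\cup(b)\cup(c)\cup(d)$ vanishes for trivial reasons while the pair is not linked; you give no argument that the extra hypotheses on $q$ (anisotropy, representing $1$) restore linkage for your chosen diagonalization. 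Second, even granting a common slot, your last step is explicitly only a ``plan'': you must show that $u,s,t$ can be chosen with $(u)\cup(s)\cup(t)=0$, so that $\qform{s}\pform{u,st}$ is genuinely a Pfister form, and that is precisely the hard core of the converse. The paper does this by a concrete chain of re-diagonalizations: $w_4(q)=0$ gives (via the cited theorem of Arason--Elman--Jacob, which is also the non-trivial content of your claimed equivalence ``$w_4(q)=0$ iff $\pform{a,b,c,d}=0$'') hyperbolicity of $\pform{a,b,c,d}$, hence isotropy of the $9$-dimensional form $q\perp\qform{ab,ac,ad}$; this produces new diagonalizations of $q$, and finally the hyperbolicity of a $3$-fold Pfister form $\pform{a,b',c''}$ yields the two-term decomposition. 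You would need to supply an argument of comparable substance to close your gap.
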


\begin{proof}
    In view of Proposition~\ref{prop:BRV}, it suffices to show that
    $\Pf_2(q)=2$ if and only if $w_4(q)=0$. Assume first $\Pf_2(q)=2$
    so that
    \[
    q=\qform{x_1,x_2,x_1x_2,y_1,y_2,y_1y_2}\qquad\text{for some $x_1$,
      \ldots, $y_4\in k^\times$}.
    \]
    For $x\in k^\times$, denote by $(x)\in H^1(k,\mu_2)$ the cohomology
    class associated to the square class of $x$. An explicit
    computation yields
    \[
    w_4(q)=(x_1)\cup(x_2)\cup(y_1)\cup(y_2).
    \]
    Since $q$ represents~$1$, the form $\qform{1}\perp q$ is isotropic.
    The $4$-fold Pfister form $\pform{x_1,x_2,y_1,y_2}$ which contains
    $\qform{1}\perp q$ as a subform is hyperbolic.
    Therefore,
    $(x_1)\cup(x_2)\cup(y_1)\cup(y_2)=0$ by \cite[Satz~1.6]{Ar}.

    For the converse, let
    \[
    q=\qform{1,a,b,c,d,abcd}\qquad\text{for some $a$, $b$, $c$, $d\in
      k^\times$}.
    \]
    Then
    \[
    w_4(q)=(a)\cup(b)\cup(c)\cup(d).
    \]
    Since $w_4(q)=0$ by hypothesis, Theorem~1 of \cite{AEJ} shows that
    the $4$-fold Pfister
    form $\pform{a,b,c,d}$ is hyperbolic. It follows that the
    $9$-dimensional subform $q\perp\qform{ab,ac,ad}$ is isotropic, hence
    $q$ represents a nonzero element of the form $a(bx^2+cy^2+dz^2)$ for
    some $x$, $y$, $z\in k$. Let $b'=bx^2+cy^2+dz^2\in k^\times$. Since
    the form $\qform{b,c,d}$ represents $b'$, we may find $c'$, $d'\in
    k^\times$ such that
    \[
    \qform{b,c,d}=\qform{b',c',d'}.
    \]
    Comparing discriminants, we have $bcd\equiv b'c'd'\bmod
    k^{\times2}$, hence
    \[
    q=\qform{1,a,b',c',d',ab'c'd'}.
    \]
    The form $\qform{1,a,b'}$ is anisotropic since $q$ is anisotropic,
    hence the $2$-fold Pfister form $\pform{a,b'}$ is anisotropic. On
    the other hand, the form $q\perp\qform{ab'}$ is isotropic since $q$
    represents $ab'$, hence $\pform{a,b'}$ represents a nonzero element
    of the form $c'r^2+d's^2+ab'c'd't^2$, for some $r$, $s$, $t\in
    k$. Let $c''=c'r^2+d's^2+ab'c'd't^2\in k^\times$, and let $d''\in
    k^\times$ be such that
    \[
    \qform{c',d',ab'c'd'}=\qform{c'',d'',ab'c''d''}.
    \]
    Thus
    \[
    q=\qform{1,a,b',c'', d'',ab'c''d''}.
    \]
    Since $\pform{a,b'}$ represents $c''$, the $3$-fold Pfister
    form $\pform{a,b',c''}$ is hyperbolic, and therefore its
    $5$-dimensional subform $\qform{1,a,b',c'',ab'c''}$ is
    isotropic. Thus, $\qform{1,a,b',c''}$ represents $ab'c''$, and we
    may find $u$, $v\in k^\times$ such that
    \[
    \qform{1,a,b',c''}=\qform{ab'c'',u,v,uv}.
    \]
    Thus
    \[
    q=\pform{u,v}+\pform{d'',ab'c''},
    \]
    and $\Pf_2(q)=2$.
\end{proof}

For arbitrary $6$-dimensional anisotropic quadratic forms in $I^2(k)$,
the $2$-Pfister number is $2$, $3$ or $4$. Note that scaling has an
important effect on the Pfister number although it does not change
the Stiefel-Whitney class. Indeed, by Theorem~\ref{thm:3} and
Remark~\ref{rem}, if $x_1$, \ldots, $x_5$ are independent
indeterminates and
\[
q=\qform{x_1,x_2,x_3,x_4,x_5, x_1x_2x_3x_4x_5},
\]
then
\[
\Pf_2(q)=4\qquad\text{and}\qquad\Pf_2(\qform{x_1}q)=3.
\]
On the other hand,
\[
\qform{x_1x_2x_3}q=\pform{x_1x_2,x_1x_3}+
\pform{x_4x_5,x_1x_2x_3x_4},
\]
hence
\[
\Pf_2(\qform{x_1x_2x_3}q)=2.
\]
More generally, the same computation shows that for an arbitrary
anisotropic form $q\in I^2(k)$ of dimension~$6$, if $d\in k^\times$ is
the discriminant of some $3$-dimensional subform of $q$, then
$\Pf_2(\qform{d}q)=2$. In the rest of this section, we give
necessary and sufficient conditions on $q$ for $\Pf_2(q)\leq3$
as well as for $\Pf_2(q)= 2$.

\medbreak
\par
As seen before, every quadratic form of dimension $6$ in $I^2(k)$ is
a scalar multiple of a form $q$ with $\Pf_2(q)= 2$.
Fix a decomposition
\[
q=\pform{a,b}+\pform{c,d}=\qform{a,b,ab,c,d,cd}.
\]
To this decomposition is associated the biquaternion algebra
$D=(a,b)_k\otimes(c,d)_k$, which is Brauer-equivalent to the Clifford
algebra of $q$, and the orthogonal involution $\sigma$ on $D$ that is
the tensor product of the conjugation involutions on $(a,b)_k$ and
$(c,d)_k$. The algebra $D$ is division since $q$ is anisotropic, see
\cite[(16.5)]{KMRT}.

\begin{thm}
    \label{thm:1}
    For $\lambda\in k^\times$, we have $\Pf_2(\qform{\lambda}q)=2$ if
    and only if $\lambda^2$ is the reduced norm
    of some $\sigma$-symmetric element in $D$, i.e.,
    \[
    \lambda^2=\Nrd_D(u) \qquad\text{for some $u\in\Sym(D,\sigma)$}.
    \]
\end{thm}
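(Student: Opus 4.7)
The plan is to interpret the condition $\Pf_2(\qform{\lambda}q)=2$ as the existence of a suitable orthogonal involution on the biquaternion algebra $D$, and then to use the classification of such involutions (governed by their discriminant) to identify the condition with $\lambda^2 \in \Nrd_D(\Sym(D,\sigma)^\times)$.

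First, I would observe that $q$ is an Albert form of $(D,\sigma)$, i.e., a $6$-dimensional form in $I^2(k)$ whose Clifford invariant is the Brauer class of $D$. Suppose $\qform{\lambda}q = \pform{\alpha,\beta}+\pform{\gamma,\delta}$ in $W(k)$. Comparing Clifford invariants and using that $c(\qform{\lambda}q) = c(q) = [D]$ (the correction term $(\lambda,\lambda)$ vanishes since $-1$ is a square in $k$), we obtain $(\alpha,\beta)_k \otimes (\gamma,\delta)_k \cong D$ as $k$-algebras, since both are $16$-dimensional central simple algebras with the same Brauer class. Transporting the tensor product of conjugation involutions across this isomorphism endows $D$ with a new orthogonal involution $\sigma'$ of tensor-product type, whose associated Albert form is $\qform{\lambda}q$. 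Conversely, any orthogonal involution of tensor-product type on $D$ with Albert form $\qform{\lambda}q$ yields directly a decomposition of $\qform{\lambda}q$ into two $2$-fold Pfister forms. Hence $\Pf_2(\qform{\lambda}q) = 2$ iff $D$ carries an orthogonal involution of tensor-product type whose Albert form is $\qform{\lambda}q$.

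Next, I would parametrize the orthogonal involutions on $D$ relative to $\sigma$. By Skolem-Noether, every orthogonal involution on $D$ has the form $\sigma_u := \Int(u)\circ\sigma$ for some $u \in \Sym(D,\sigma)^\times$, uniquely determined up to $k^\times$-scaling. Combining the standard formula
\[
\operatorname{disc}(\sigma_u) \equiv \Nrd_D(u)\cdot\operatorname{disc}(\sigma)\pmod{k^{\times2}}
\]
with the fact that $\sigma$, being of tensor-product type, has trivial discriminant, yields $\operatorname{disc}(\sigma_u) \equiv \Nrd_D(u)$. Since orthogonal involutions on a biquaternion algebra are of tensor-product type exactly when their discriminant is trivial (cf.\ \cite{KMRT}), $\sigma_u$ is of tensor type iff $\Nrd_D(u) \in k^{\times2}$, say $\Nrd_D(u) = \mu^2$.

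The final step is to verify that, under this twist, the Albert form of $(D,\sigma_u)$ equals $\qform{\mu}q$ in $W(k)$. This can be done by computing the trace form $T_{\sigma_u}$ on $\Skew(D,\sigma_u) = u\cdot\Skew(D,\sigma)$, or equivalently via the Pfaffian norm on the twisted skew space. Combining the three steps gives: $\Pf_2(\qform{\lambda}q) = 2$ iff there exists $u \in \Sym(D,\sigma)^\times$ with $\Nrd_D(u) = \lambda^2$. The main obstacle is this last step: one must produce the \emph{specific} scalar $\mu$ with $\mu^2 = \Nrd_D(u)$, rather than merely some representative modulo similarity factors of $q$, which requires a careful tracking of the normalization of the Albert form as the involution is twisted by an element of $\Sym(D,\sigma)^\times$.
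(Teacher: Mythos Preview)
Your approach is essentially the paper's: translate $\Pf_2(\qform{\lambda}q)=2$ into the existence of a tensor-product-type orthogonal involution $\sigma'$ on $D$ whose associated Albert form is $\qform{\lambda}q$, parametrize $\sigma'=\Int(u)\circ\sigma$ with $u\in\Sym(D,\sigma)^\times$, and match the discriminant. The obstacle you flag in the last paragraph is genuine, and your write-up stops short of resolving it; as it stands the three steps only yield ``there exists $u$ with $\Nrd_D(u)=\mu^2$ and $\qform{\mu}q\simeq\qform{\lambda}q$'', not $\Nrd_D(u)=\lambda^2$ on the nose.

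Two separate things are needed. For the converse (your step~5), the paper makes this precise via the Pfaffian: on $\Skew(D,\sigma_u)=u\,\Skew(D,\sigma)$ define $p_0(s')=\lambda^{-1}u\,p_\sigma(s'u)$; one checks $s'p_0(s')=\lambda^{-1}q_\sigma(s'u)\in k$, so $p_0$ is a scalar multiple of the canonical $p_{\sigma_u}$, and a direct computation using $\Nrd_D(u)=\lambda^2$ gives $p_0^2=\Id$, forcing the scalar to be $\pm1$ and hence $q_{\sigma_u}\simeq\qform{\lambda}q$. For the forward direction the missing idea is an appeal to the structure of the group $G(q)$ of similitude multipliers: by \cite[(15.34)]{KMRT} every multiplier is of the form $c^2\Nrd_D(v)$ with $c\in k^\times$, $v\in D^\times$. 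So from $\Nrd_D(u)=\mu^2$ and $\lambda\mu^{-1}=c^2\Nrd_D(v)$ one replaces $u$ by $u'=c\,v\,u\,\sigma(v)$, which is again $\sigma$-symmetric and satisfies $\Nrd_D(u')=c^4\Nrd_D(v)^2\Nrd_D(u)=\lambda^2$. Without this adjustment step your argument does not close.
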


\begin{proof}
    Let $(a,b)_k^0$ (resp.\ $(c,d)_k^0$) be the $k$-vector space of pure
    quaternions in $(a,b)_k$ (resp.\ in $(c,d)_k$). The vector space of
    $\sigma$-skew-symmetric elements in $D$ is
    \[
    \Skew(D,\sigma)=\bigl((a,b)_k^0\otimes1\bigr)\oplus
    \bigl(1\otimes(c,d)_k^0\bigr).
    \]
    Let $p_\sigma$ be the linear operator on $\Skew(D,\sigma)$ defined
    by
    \[
    p_\sigma(x\otimes1+1\otimes y)=x\otimes1-1\otimes y \qquad\text{for
      $x\in(a,b)_k^0$ and $y\in(c,d)_k^0$}.
    \]
    The formula $q_\sigma(s)=s\,p_\sigma(s)$ defines a quadratic form on
    $\Skew(D,\sigma)$, and we have
    \[
    q_\sigma\simeq\qform{a,b,ab,c,d,cd}=q.
    \]
    Suppose now $\Pf_2(\qform{\lambda}q)=2$. We fix a decomposition
    \[
    \qform{\lambda}q=\pform{a',b'}+\pform{c',d'}=
    \qform{a',b',a'b',c',d',c'd'}.
    \]
    The Clifford algebras of $q$ and $\qform{\lambda}q$ are isomorphic,
    hence we may identify
    \[
    D=(a',b')_k\otimes(c',d')_k.
    \]
    Let $\sigma'$ be the orthogonal involution on $D$ that is the tensor
    product of the conjugation involutions on $(a',b')_k$ and
    $(c',d')_k$. By \cite[(2.7)]{KMRT}, there is a unit $u\in\Sym(D,\sigma)$
    such that $\sigma'=\Int(u)\circ\sigma$, i.e.,
    $\sigma'(x)=u\sigma(x)u^{-1}$ for all $x\in D$. On
    $\Skew(D,\sigma')$ we may define a linear operator $p_{\sigma'}$ and
    a quadratic form $q_{\sigma'}$ in the same way as $p_\sigma$ and
    $q_\sigma$ were defined on $\Skew(D,\sigma)$, and we have
    \[
    q_{\sigma'}\simeq\qform{\lambda}q.
    \]
    It is easily seen that
    $\Skew(D,\sigma')=u\Skew(D,\sigma)=\Skew(D,\sigma)u^{-1}$. The
    linear operator $p'$ on $\Skew(D,\sigma')$ defined by
    \[
    p'(s')=up_\sigma(s'u) \qquad\text{for $s'\in\Skew(D,\sigma')$}
    \]
    satisfies
    \begin{equation}
      \label{eq:s}
      s'p'(s')=s'up_\sigma(s'u)=q_\sigma(s'u)\in k.
    \end{equation}
    Therefore, by \cite[(16.22)]{KMRT}, the map $p'$ is a multiple of
    $p_{\sigma'}$: there exists $\lambda_1\in k^\times$ such that
    $p'=\lambda_1p_{\sigma'}$. It follows that
    \begin{equation}
      \label{eq:lambda}
      s'p'(s')=\lambda_1q_{\sigma'}(s') \qquad\text{for
        $s'\in\Skew(D,\sigma')$},
    \end{equation}
    and \eqref{eq:s} shows that the map $s'\mapsto s'u$ is an isometry
    $\qform{\lambda_1}q_{\sigma'}\simeq q_\sigma$. Hence
    $\qform{\lambda_1\lambda}q\simeq q$ and
    $\lambda\lambda_1^{-1}$ is the multiplier of a similitude
    of $q$. By \cite[(15.34)]{KMRT}, we may find $\lambda_2\in k^\times$
    and $v\in D^\times$ such that
    \begin{equation}
      \label{eq:new}
      \lambda\lambda_1^{-1}=\lambda_2^2\Nrd_D(v).
    \end{equation}
    On the other hand, for $s'\in\Skew(D,\sigma')$ we have
    $q_\sigma(s'u)^2=\Nrd_D(s'u)$ and $q_{\sigma'}(s')^2=\Nrd_D(s')$ by
    \cite[(16.25)]{KMRT}, hence \eqref{eq:s} and \eqref{eq:lambda} yield
    \[
    \lambda_1^2=\Nrd_D(u).
    \]
    Using this equation, we derive from \eqref{eq:new}:
    \[
    \lambda^2=\lambda_1^2\lambda_2^4\Nrd_D(v)^2=
    \Nrd_D\bigl(\lambda_2vu\sigma(v)\bigr).
    \]
    Since $\lambda_2vu\sigma(v)\in\Sym(D,\sigma)$, the element $\lambda$
    satisfies the condition in the theorem.

    Conversely, assume $\lambda^2=\Nrd_D(u)$ for some
    $u\in\Sym(D,\sigma)$. Define an
    orthogonal involution $\sigma'$ on $D$ by
    $\sigma'=\Int(u)\circ\sigma$. By \cite[(7.3)]{KMRT}, the discriminant of
    $\sigma'$ is $\Nrd_D(u)=\lambda^2$, hence by \cite[(15.12)]{KMRT} we may
    find quaternion subalgebras $(a',b')_k$, $(c',d')_k\subseteq D$ such
    that
    \[
    D=(a',b')_k\otimes(c',d')_k,
    \]
    and $\sigma'$ is the tensor product of the conjugations on
    $(a',b')_k$ and $(c',d')_k$. We may then define $p_{\sigma'}$ and
    $q_{\sigma'}$ as above, and we have
    \begin{equation}
      \label{eq:q}
      q_{\sigma'}\simeq\qform{a',b',a'b',c',d',c'd'}.
    \end{equation}
    On the other hand, define a linear operator $p_0$ and a quadratic
    form $q_0$ on $\Skew(D,\sigma')$ by
    \[
    p_0(s')=\lambda^{-1}up_\sigma(s'u),\qquad
    q_0(s')=\lambda^{-1}q_\sigma(s'u) \quad\text{for
      $s'\in\Skew(D,\sigma')$}.
    \]
    By definition, we have
    \begin{equation}
      \label{eq:q0}
      q_0\simeq\qform{\lambda}q_\sigma\simeq\qform{\lambda}q.
    \end{equation}
    Moreover, $s'p_0(s')=q_0(s')\in k$ for $s'\in\Skew(D,\sigma')$,
    hence $p_0$ is a multiple of $p_{\sigma'}$ by \cite[(16.22)]{KMRT}: we have
    $p_0=\mu p_{\sigma'}$ for some $\mu\in k^\times$, hence also
    $q_0=\mu q_{\sigma'}$. For $s'\in\Skew(D,\sigma')$ we have by
    \cite[(16.25)]{KMRT}
    \[
    p_0^2(s')=\lambda^{-2}up_\sigma(up_\sigma(s'u)u) =
    \lambda^{-2}\Nrd_D(u) p_\sigma^2(s'u)u^{-1}.
    \]
    Since $p_\sigma^2=\Id$ and $\Nrd_D(u)=\lambda^2$, it follows that
    $p_0^2=\Id$. Now, we also have $p_{\sigma'}^2=\Id$, hence
    $\mu=\pm1$. Therefore, $q_0\simeq\qform{\pm1}q_{\sigma'}\simeq
    q_{\sigma'}$. By \eqref{eq:q} and \eqref{eq:q0} we have
    \[
    \qform{\lambda}q\simeq\qform{a',b',a'b',c',d',c'd'},
    \]
    hence $\Pf_2(\qform{\lambda}q)=2$.
\end{proof}

Note that the group $S(q)\subset k^\times$ of spinor norms of $q$ can
be described in terms of $D$: we have by \cite[(15.34)]{KMRT}
\[
S(q)=\{\lambda\in k^\times\mid\lambda^2\in\Nrd_D(D^\times)\}.
\]
Therefore, the following is a direct consequence of
Theorem~\ref{thm:1}:

\begin{cor}
    Let $q\in I^2(k)$ be an anisotropic quadratic form of dimension~$6$,
    and let $\lambda\in k^\times$. If
    $\Pf_2(q)=\Pf_2(\qform{\lambda}q)=2$, then $\lambda$ is a spinor
    norm of $q$.
\end{cor}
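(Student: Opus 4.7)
The plan is to use Theorem~\ref{thm:1} to extract an element of $D^\times$ whose reduced norm equals $\lambda^2$, and then read off directly from the given description of the spinor norm group $S(q)$ that $\lambda\in S(q)$.

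More precisely, I would first use the hypothesis $\Pf_2(q)=2$ to fix a decomposition $q=\pform{a,b}+\pform{c,d}$; this realizes the biquaternion algebra $D=(a,b)_k\otimes(c,d)_k$ as a specific model of (the Brauer class of) the Clifford algebra of $q$, equipped with the orthogonal involution $\sigma$ that is the tensor product of the two conjugation involutions, exactly as in the setup preceding Theorem~\ref{thm:1}. Because $q$ is anisotropic of dimension $6$, the algebra $D$ is a division algebra by \cite[(16.5)]{KMRT}, so every nonzero element of $D$ is a unit.

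Next, applying Theorem~\ref{thm:1} to this fixed decomposition together with the hypothesis $\Pf_2(\qform{\lambda}q)=2$ produces an element $u\in\Sym(D,\sigma)$ with $\lambda^2=\Nrd_D(u)$. Since $\Nrd_D(u)=\lambda^2\neq 0$, the element $u$ is a unit in $D$, hence $\lambda^2\in\Nrd_D(D^\times)$. By the description of spinor norms displayed just before the corollary,
\[
S(q)=\{\mu\in k^\times\mid \mu^2\in\Nrd_D(D^\times)\},
\]
and this gives $\lambda\in S(q)$, as required.

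There is essentially no obstacle: the work has already been done in Theorem~\ref{thm:1}, and the only observation needed is that the $\sigma$-symmetric element $u$ delivered there lies a fortiori in $D^\times$, so that the symmetry hypothesis is forgotten and one lands directly in the set defining $S(q)$. Note that the hypothesis $\Pf_2(q)=2$ is used only to fix the pair $(D,\sigma)$ so that Theorem~\ref{thm:1} is applicable in the form stated; the hypothesis $\Pf_2(\qform{\lambda}q)=2$ then supplies the element $u$.
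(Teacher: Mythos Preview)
Your argument is correct and is exactly the approach the paper takes: the corollary is stated there as a direct consequence of Theorem~\ref{thm:1} together with the description $S(q)=\{\mu\in k^\times\mid\mu^2\in\Nrd_D(D^\times)\}$, and you have simply unpacked that deduction. The only step you add beyond the paper's one-line remark is the (harmless) observation that $u$ is automatically a unit since $D$ is division.
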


We now turn to a characterization of quadratic forms of dimension~$6$
with $2$-Pfister number at most~$3$.

\begin{thm}
    Let $q\in I^2(k)$ be an anisotropic quadratic form of
    dimension~$6$. We have $\Pf_2(q)\leq3$ if and only if there exist a
    $4$-dimensional quadratic form $q_1$ over $k$ and scalars $\mu$,
    $\mu'$, $\nu\in k^\times$ satisfying the following conditions:
    \begin{enumerate}
    \item[(i)]
    $q\simeq q_1\perp\qform{\mu'}\pform{\nu}$;
    \item[(ii)]
    $\Pf_2(q_1\perp\qform{\mu}\pform{\nu})\leq2$;
    \item[(iii)]
    $\pform{\mu,\mu',\nu}=0$.
    \end{enumerate}
\end{thm}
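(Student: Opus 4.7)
Assuming (i)--(iii), invoke (ii) to write $q_1 + \qform{\mu}\pform{\nu} = \rho_1 + \rho_2$ in $W(k)$ for some $2$-fold Pfister forms $\rho_1, \rho_2$. Combined with (i), and using that $-1$ is a square in $k$ (so $W(k)$ has characteristic $2$ and $\qform{\mu'}-\qform{\mu}=\qform{\mu,\mu'}$), this gives
\[
q = \rho_1 + \rho_2 + \qform{\mu,\mu'}\pform{\nu} \quad\text{in } W(k).
\]
Expanding (iii) yields $0 = \pform{\mu,\mu',\nu} = \qform{1,\mu,\mu',\mu\mu'}\pform{\nu}$, which rearranges (in characteristic $2$) to $\qform{\mu,\mu'}\pform{\nu} = \qform{1,\mu\mu'}\pform{\nu} = \pform{\mu\mu',\nu}$. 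Hence $q = \rho_1 + \rho_2 + \pform{\mu\mu',\nu}$, so $\Pf_2(q) \leq 3$.

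\textbf{Only if direction.} Assume $\Pf_2(q) \leq 3$, and concentrate on the substantive case $\Pf_2(q) = 3$, so that $q = \pi_1 + \pi_2 + \pi_3$ in $W(k)$ with each $\pi_i$ a nonzero $2$-fold Pfister form. The ``if''-direction computation suggests realizing $\pi_3$ as the third summand $\pform{\mu\mu',\nu}$: my plan is to produce a $2$-dimensional common subform $\qform{\mu'}\pform{\nu}$ of $q$ and $\pi_3$ (so $\mu',\nu \in D(\pi_3)$ and $\qform{\mu'}\pform{\nu} \subset q$), then to write $\pi_3 = \pform{\mu_0,\nu}$ using $\nu \in D(\pi_3)$, and to set $\mu := \mu_0\mu'$. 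Condition (i) is then immediate. For (iii), the identity $\pform{a,a}=0$ in $W(k)$ reduces $\pform{\mu,\mu',\nu} = \pform{\mu_0\mu',\mu',\nu}$ to $\pform{\mu_0,\mu',\nu} = \pform{\mu'} \cdot \pi_3$, which vanishes by Pfister roundness since $\mu' \in D(\pi_3)$. For (ii), the same roundness gives $\qform{\mu'}\pi_3 = \pi_3$, hence
\[
q_1 + \qform{\mu}\pform{\nu} = q + \qform{\mu,\mu'}\pform{\nu} = q + \qform{\mu'}\pform{\mu_0,\nu} = q + \pi_3 = \pi_1 + \pi_2,
\]
so $\Pf_2(q_1 + \qform{\mu}\pform{\nu}) \leq 2$. (The easier case $\Pf_2(q) \leq 2$ is handled separately: any binary subform of $q$ with $\mu := \mu'$ makes (iii) trivial via $\pform{\mu',\mu',\nu}=0$ and reduces (ii) to $\Pf_2(q)\leq 2$.)

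\textbf{Main obstacle.} The remaining task is to produce the common binary subform. By Witt cancellation, this amounts to showing that the $10$-dimensional form $q \perp \pi_3$ has Witt index at least~$2$, equivalently, that $\pi_1 + \pi_2 = q + \pi_3$ has Witt-reduced dimension at most~$6$. The only possible dimensions for the sum of two $2$-fold Pfister forms are $0$, $4$, $6$, and~$8$; the value~$0$ is excluded by the anisotropy of $q$, and the two ``linked'' cases $4$ and~$6$ allow the construction to succeed immediately. By permuting the roles of $\pi_1, \pi_2, \pi_3$, we may therefore reduce to the residual case where all three pairwise sums $\pi_i + \pi_j$ are anisotropic of dimension~$8$. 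Handling this case---where no pair of Pfister summands is linked---is the principal difficulty: it will require either modifying the Pfister decomposition of $q$ so as to expose a linked pair, or constructing $(\mu,\mu',\nu)$ directly from the biquaternion algebra Brauer-equivalent to the Clifford algebra of $q$, in the spirit of Theorem~\ref{thm:1}. I expect this last step to be the crux of the proof.
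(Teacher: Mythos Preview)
Your ``if'' direction is correct and matches the paper's argument essentially line for line.

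The ``only if'' direction, however, has a real gap, and it is precisely the one you flag as ``the crux'': when $\pi_1+\pi_2$ is anisotropic of dimension~$8$ you cannot extract a common binary subform of $q$ and $\pi_3$, and you offer no way past this. The paper sidesteps the difficulty completely by changing \emph{which} pair of forms is asked to share a binary piece. Set
\[
A=\qform{x_1,y_1,x_1y_1,x_2,y_2,x_2y_2},
\]
the $6$-dimensional pure part of $\pi_1\perp\pi_2$; since the two copies of $\qform{1}$ cancel, $A=\pi_1+\pi_2$ in $W(k)$. Then $A\perp\pi_3$ is a $10$-dimensional form with Witt class $q$, so its Witt index is~$2$ \emph{unconditionally}---no linkage hypothesis on $\pi_1,\pi_2$ is needed. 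Hence $A$ and $\pi_3$ always share a common binary subform $\qform{\mu}\pform{\nu}$. Now define $q_1$ as the orthogonal complement of this subform in $A$ (not in $q$!): condition~(ii) is then automatic because $q_1\perp\qform{\mu}\pform{\nu}=A$ has $\Pf_2\leq 2$ by construction. The complementary binary piece in $\pi_3$ has discriminant~$\nu$, hence has the shape $\qform{\mu'}\pform{\nu}$ for some $\mu'$; summing the two decompositions in $W(k)$ gives $q=q_1+\qform{\mu'}\pform{\nu}$, and since both sides have dimension~$6$ with $q$ anisotropic this is an isometry, which is~(i). Finally $\pi_3\simeq\qform{\mu,\mu'}\pform{\nu}$ represents~$1$, so $\qform{1,\mu,\mu',\mu\nu,\mu'\nu}$ is isotropic; this $5$-dimensional form sits inside the Pfister form $\pform{\mu,\mu',\nu}$, forcing~(iii).

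So the obstacle you isolate simply disappears once the common binary subform is sought inside $A$ rather than inside $q$; there is no need to modify the Pfister decomposition or to invoke the biquaternion algebra. Note also that with this setup the case split $\Pf_2(q)\leq2$ versus $\Pf_2(q)=3$ becomes unnecessary: one may allow $\pi_3$ to be zero and the argument goes through uniformly.
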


\begin{proof}
    Suppose $\Pf_2(q)\leq3$, and let
    \[
    q=\pform{x_1,y_1}+\pform{x_2,y_2}+\pform{x_3,y_3}=
    \qform{x_1,y_1,x_1y_1,x_2,y_2,x_2y_2} + \qform{1,x_3,y_3,x_3y_3}.
    \]
    Since the dimension of $q$ is $6$, there exists a $2$-dimensional
    form $\qform{\mu}\pform{\nu}$ that is a subform of
    $\qform{x_1,y_1,x_1y_1,x_2,y_2,x_2y_2}$ and of
    $\qform{1,x_3,y_3,x_3y_3}$. Thus, we can write
    \begin{equation}
      \label{eq:4}
      \qform{x_1,y_1,x_1y_1,x_2,y_2,x_2y_2}
      =q_1\perp\qform{\mu}\pform{\nu}
    \end{equation}
    and
    \begin{equation}
      \label{eq:5}
      \qform{1,x_3,y_3,x_3y_3}  = \qform{\mu_1,\mu_2}\perp\qform{\mu}\pform{\nu}
    \end{equation}
    for some $4$-dimensional quadratic form $q_1$ and some scalars
    $\mu_1$, $\mu_2$. Equation~\eqref{eq:4} readily
    yields~(ii). Comparing discriminants on each side of \eqref{eq:5},
    we see that
    \[
    \qform{\mu_1,\mu_2}=\qform{\mu'}\pform{\nu}\qquad\text{for some $\mu'\in
      k^\times$}.
    \]
    Therefore, adding \eqref{eq:4} and \eqref{eq:5} yields~(i). Finally,
    \eqref{eq:5} shows that $\qform{\mu,\mu'}\pform{\nu}$
    represents~$1$, hence $\qform{1,\mu,\mu',\mu\nu,\mu'\nu}$ is
    isotropic. Since this form is contained in the $3$-fold Pfister form
    $\pform{\mu,\mu',\nu}$, we have~(iii).

    Conversely, suppose (i), (ii), and (iii) hold for some
    $4$-dimensional quadratic form $q_1$ and some scalars $\mu$, $\mu'$,
    $\nu\in k^\times$. Since
    \[
    \pform{\mu,\mu',\nu}=\qform{\mu,\mu\nu,\mu',\mu'\nu}\perp
    \qform{1,\nu,\mu\mu',\mu\mu'\nu},
    \]
    condition~(iii) yields
    \[
    \qform{\mu}\pform{\nu}\perp\qform{\mu'}\pform{\nu}=\pform{\nu,\mu\mu'}.
    \]
    Therefore, we derive from~(i) that
    \[
    q=\bigl(q_1\perp\qform{\mu}\pform{\nu}\bigr)+\pform{\nu,\mu\mu'}.
    \]
    Since $\Pf_2(q_1\perp\mu\pform{\nu})\leq2$ by~(ii), it follows that
    $\Pf_2(q)\leq3$.
\end{proof}

\end{document}